\newcommand{\be}{\begin{equation}}
\newcommand{\ee}{\end{equation}}
\newcommand{\bal}{\begin{aligned}}
\newcommand{\eal}{\end{aligned}}
\newcommand{\bee}{\begin{equation*}}
\newcommand{\eee}{\end{equation*}}
\newtheorem{thm}{Theorem}
\newtheorem{cor}[thm]{Corollary}
\numberwithin{equation}{section}
\begin{document}

\title[Harmonic manifolds]{Characterizing the harmonic manifolds by the eigenfunctions of the Laplacian}
\author[J. Choe]{Jaigyoung Choe}
\author[S. Kim]{Sinhwi Kim}
\author [J. H. Park]{JeongHyeong Park}
\address{Korea Institute for Advanced Study, 85 Hoegiro, Dongdaemungu, Seoul 02455\\
  Korea}
 \email{choe@kias.re.kr}
  \address{
 Department of Mathematics,
 Sungkyunkwan University, 2066 Seobu-ro, Jangan-gu,
 Suwon 16419,
  Korea}
\email{parkj@skku.edu, kimsinhwi@skku.edu}

\thanks{ J.C. supported in part by NRF 2011-0030044, SRC-GAIA, S.K. and J.P. supported by Basic Science Research
Program through the National Research Foundation of Korea(NRF)
funded by the Ministry of Education(NRF-2016R1D1A1B03930449).}

\subjclass[2010]{53C25, 53C35}

\keywords{density function, harmonic manifold}
\begin{abstract}
The space forms, the complex hyperbolic spaces and the quaternionic hyperbolic spaces are characterized as the harmonic manifolds with specific radial eigenfunctions of the Laplacian.
\end{abstract}

 \maketitle
\section{Introduction}
In the Euclidean space $\mathbb R^n$ it is well-known hat the Laplace operator $\Delta$ is invariant under orthogonal transformations. Hence $\mathbb R^n$ has the property that the Laplacian of a radial function (function depending only on the distance to the origin) is still radial. Then, is a Riemannian manifold $M$ with this property necessarily $\mathbb R^n$ or the space form? In regard to this interesting question, a harmonic manifold is introduced.

A complete Riemannian manifold $M$ is called {\it harmonic} if it satisfies one of the following equivalent conditions:
\begin{enumerate}
\item For any point $p\in M$ and the distance function $r(\cdot):={\rm dist}(p,\cdot)$, $\Delta r^2$ is radial;
\item For any $p\in M$ there exists a nonconstant radial harmonic function in a punctured neighborhood of $p$;
\item Every small geodesic sphere in $M$ has constant mean curvature;
\item Every harmonic function satisfies the mean value property \cite{Wi};
\item For any $p\in M$ the volume density function $\omega_p=\sqrt{{\rm det}g_{ij}}$ in normal coordinates centered at $p$ is radial.
\end{enumerate}

Lichnerowicz conjectured that every harmonic manifold $M^n$ is flat or rank 1 symmetric. This conjectue has been proved to be true for dimension $n\leq5$ \cite{B,L,N,Sz,Wa}. But Damek and Ricci \cite{DR} found that there are many {{counterexamples}} if dimension $n\geq7$. Euh, Park and Sekigawa \cite{EPS} provide {{a new}} proof of the Lichnerowicz
conjecture for {{dimension $n = 4, 5$ in a slightly}} more general setting using universal curvature identities.

In order to further characterize harmonic manifolds, Shah \cite{Sh}, Szab\'o \cite{Sz} and Ramachandran-Ranjan \cite{RR} paid attention to the volume density function $\omega_p(r)$ as defined in the equivalent condition (5) above. Shah proved that a harmonic manifold with the same volume density as $\mathbb R^n$ is flat, Szab\'o showed $\mathbb S^n$ is the only harmonic manifold with $\omega_p(r)=\frac{1}{r^{n-1}}\sin^{n-1}r$ and Ramachandran-Ranjan showed that a noncompact simply connected harmonic manifold $M^n$ with $\omega_p(r)=\frac{1}{r^{n-1}}\sinh^{n-1}r$ is $\mathbb H^n$. Ramachandran-Ranjan also proved that a noncompact simply connected K\"{a}hler harmonic manifold $M^{2n}$ with $\omega_p(r)=\frac{1}{r^{2n-1}}\sinh^{2n-1}r\cosh r$ is isometric to the complex hyperbolic space. A similar theorem was proved for the quaternionic hyperbolic space as well.

In this paper we remark the fact that the Laplacian of specific radial functions are very simple in space forms.  It is well known that in $\mathbb R^n$
\begin{equation}\label{1}
\Delta r^{2-n}=0\,\,\,\,{\rm and}\,\,\,\,\Delta r^2=2n;
\end{equation}
in $\mathbb S^n$ and $\mathbb H^n$ \cite{CG},
\begin{equation}\label{2}\Delta\cos r=-n\cos r\,\,\,\,{\rm and}\,\,\,\,\Delta \cosh r=n\cosh r,\,\,\,\,{\rm respectively};\end{equation}
and for some hypergeometric function $f$ on $\mathbb CH^n$ and $\mathbb QH^n$,
\begin{equation}\label{3}\Delta f=4(n+1)f\,\,\,\,{\rm and} \,\,\,\,\Delta f=8(n+1)f,\,\,\,\,{\rm respectively}.\end{equation}
Motivated by this fact, we characterize harmonic manifolds in terms of these radial functions. It will be proved that if a radial harmonic function defined in a punctured neighborhood of a harmonic manifold $M$, as in the equivalent condition (2) above, is the same as the radial Green's function of a space form, $\mathbb CH^n$ or $\mathbb QH^n$, then $M$ is the space form, $\mathbb CH^n$ or $\mathbb QH^n$, repectively. We also prove that if a radial function on a harmonic manifold $M$ satisfies \eqref{1}, \eqref{2} or \eqref{3}, then $M$ must be $\mathbb R^n$, $\mathbb S^n$, $\mathbb H^n$, $\mathbb CH^n$ or $\mathbb QH^n$. Finally, we show that if the mean curvature of a geodesic sphere in a harmonic manifold $M$ is the same as that in a space form, $\mathbb CH^n$ or $\mathbb QH^n$, then $M$ is the space form, $\mathbb CH^n$ or $\mathbb QH^n$, respectively.

\section{Laplacian}

The radial Green's functions of $\mathbb R^n$, $\mathbb S^n$ and $\mathbb H^n$ are $ \frac{1}{(2-n)n\omega_n}r^{2-n}$ ($\frac{1}{2\pi}\log r$ if $n=2,\omega_n=$  volume of a unit ball in $\mathbb R^n$) and $G(r)$ such that $G'(r)=\frac{1}{n\omega_n}\sin^{1-n}r$, $G'(r)=\frac{1}{n\omega_n}\sinh^{1-n}r$, respectively.
 \begin{thm}\label{thm:1}
Let $G_p(r)$ be a nonconstant radial harmonic function on a punctured neighborhood of $p$ in a simply connected harmonic manifold $M^n$ with $r(\cdot)={\rm dist}(p,\cdot)$. If $G_p(r)$ is the same as the radial Green's function of a space form, $\mathbb CH^n$ or $\mathbb QH^n$ at every point $p\in M$, then $M$ is the space form, $\mathbb CH^n$ or $\mathbb QH^n$, respectively.
 \end{thm}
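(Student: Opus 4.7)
The plan is to convert the hypothesis into an equality of radial volume densities and then invoke the density-based rigidity theorems recalled in the introduction.

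First I would write the radial Laplacian on a harmonic manifold in divergence form. Setting $J(r)=r^{n-1}\omega_p(r)$, the volume form in geodesic polar coordinates at $p$ is $J(r)\,dr\,d\sigma$, and this expression is radial precisely by condition~(5) of the definition of harmonicity. For any smooth radial function $f$ one then has
\[
\Delta f(r)\;=\;\frac{1}{J(r)}\bigl(J(r)\,f'(r)\bigr)'.
\]
Applying this to $G_p$ and using $\Delta G_p=0$ integrates once to $J(r)\,G_p'(r)=C$ for a constant $C\ne 0$, nonzero because $G_p$ is nonconstant.

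Next I would run the identical calculation in the relevant model space $M_0$ (the space form, $\mathbb CH^n$ or $\mathbb QH^n$): its radial Green's function $G^0$ satisfies $J_0(r)(G^0)'(r)=C_0$ for some nonzero $C_0$. Since $G_p(r)=G^0(r)$ by hypothesis, dividing these two identities gives
\[
\frac{J(r)}{J_0(r)}\;\equiv\;\frac{C}{C_0},
\]
and the universal asymptotics $J(r)\sim J_0(r)\sim r^{n-1}$ as $r\to 0$ force this constant to be~$1$. Hence $\omega_p(r)=\omega_p^{M_0}(r)$ on the punctured neighborhood, and by real-analyticity of the density of a harmonic manifold the identity extends to the full common domain.

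The conclusion is now immediate from the density characterizations recalled in the introduction: Shah's theorem in the $\mathbb R^n$ case, Szab\'o's theorem in the $\mathbb S^n$ case, and the Ramachandran--Ranjan theorems in the $\mathbb H^n$, $\mathbb CH^n$ and $\mathbb QH^n$ cases; the simple connectedness hypothesis on $M$ is precisely what those results require in the noncompact settings. The two ODE steps above are essentially routine, so the substantive content of the theorem is packaged inside the cited rigidity results. The main obstacle I therefore anticipate is bookkeeping in nature: one must check that the normalizations of the Green's functions listed in Section~2 are compatible with the density formulas used in those theorems, so that the identity $\omega_p=\omega_p^{M_0}$ really does trigger the intended conclusion.
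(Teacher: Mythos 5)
Your proposal is correct and follows essentially the same route as the paper: both arguments integrate the radial Laplace equation once to conclude that $\omega_p$ coincides with the volume density of the model space, and then invoke the Shah, Szab\'o and Ramachandran--Ranjan rigidity theorems. The only cosmetic difference is that the paper fixes the constant of integration via $\Delta G_p=\delta_p$ and the divergence theorem over a geodesic ball, whereas you fix it by the asymptotics $J(r)\sim r^{n-1}$ as $r\to 0$.
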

\begin{proof}
Let $\delta_p$ be the Dirac delta function centered at $p\in M$. Integrate $\Delta G_p(r)=\delta_p$ over a geodesic ball $D_r$ of radius $r$ with center at $p$:
$$1=\int_{D_r}\Delta G_p(r)=\int_{\partial D_r}G_p'(r).$$
Hence $${\rm vol}(\partial D_r)=\frac{1}{G_p'(r)}\,\,\,\,{\rm and}\,\,\,\,{\rm vol}(D_r)=\int_0^r\frac{1}{G_p'(r)}=\int_{{\rm exp}_p^{-1}(D_r)}\omega_p(r).$$
Then $M$ should have the same volume density $\omega_p(r)$ as a space form, $\mathbb CH^n$, or $\mathbb QH^n$. Therefore by Shah \cite{Sh}, Szab\'o \cite{Sz}, Ramachandran-Ranjan \cite{RR}{{,}} $M$ is $\mathbb R^n, \mathbb S^n, \mathbb H^n$, $\mathbb CH^n$ or $\mathbb QH^n$, respectively.
\end{proof}
\begin{cor}
If $\Delta r^2=2n$  for $r(\cdot)={\rm dist}(p,\cdot)$ at any point $p$ of a harmonic manifold $M^n$, then $M$ is flat.
\end{cor}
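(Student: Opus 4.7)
My plan is to reduce this corollary directly to Theorem \ref{thm:1} by exhibiting the Euclidean Green's function as a radial harmonic function on $M$. The strategy has two small computations followed by an invocation of the previous theorem.

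First I would apply the product rule to write $\Delta r^2 = 2|\nabla r|^2 + 2r\Delta r = 2 + 2r\Delta r$, using $|\nabla r|=1$ on the punctured neighborhood of $p$ where $r$ is smooth. The hypothesis $\Delta r^2 = 2n$ therefore translates into the pointwise identity $\Delta r = (n-1)/r$, which is exactly the Euclidean value. Because $M$ is harmonic, $\Delta r$ is automatically radial, so the general formula $\Delta f(r) = f''(r) + f'(r)\Delta r$ specializes for every radial $f$ to the Euclidean radial Laplacian $\Delta f(r) = f''(r) + \frac{n-1}{r}f'(r)$.

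Next I would substitute the Euclidean Green's function $G_p(r) = \frac{1}{(2-n)n\omega_n}r^{2-n}$ (or $\frac{1}{2\pi}\log r$ when $n=2$) into this formula; a one-line check, identical to the classical computation in $\mathbb R^n$, yields $\Delta G_p \equiv 0$ on the punctured neighborhood. Thus at every $p\in M$ there is a nonconstant radial harmonic function coinciding with the radial Green's function of $\mathbb R^n$, and Theorem \ref{thm:1} (with the space form taken to be $\mathbb R^n$) delivers $M = \mathbb R^n$, proving flatness.

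There is no substantive obstacle; the entire content is the equivalence of $\Delta r^2 = 2n$ with every small geodesic sphere having Euclidean mean curvature $(n-1)/r$. The only detail requiring minor care is the simply-connected hypothesis of Theorem \ref{thm:1}, which is handled by passing to the universal cover, since both the harmonic-manifold property and the pointwise identity $\Delta r^2 = 2n$ are purely local and lift under a local isometry.
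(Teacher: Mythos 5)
Your proposal is correct and follows essentially the same route as the paper: both verify from $\Delta r^2=2n$ that the Euclidean Green's function $\frac{1}{(2-n)n\omega_n}r^{2-n}$ (resp.\ $\frac{1}{2\pi}\log r$) is harmonic on a punctured neighborhood and then invoke Theorem \ref{thm:1}, the paper doing the computation via the identity $\Delta f^k=k(k-1)f^{k-2}|\nabla f|^2+kf^{k-1}\Delta f$ with $f=r^2$ rather than first extracting $\Delta r=(n-1)/r$. Your added remark on passing to the universal cover to meet the simply-connected hypothesis of Theorem \ref{thm:1} is a legitimate point of care that the paper's proof leaves implicit.
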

\begin{proof}
It is known that $$\Delta f^k=k(k-1)f^{k-2}|\nabla f|^2+kf^{k-1}\Delta f.$$
Setting $f=r^2$ and $k=1-n/2$, $n\neq2$, one can compute that
$$\Delta r^{2-n}=0.$$
Hence $M$ has a radial harmonic function $ \frac{1}{(2-n)n\omega_n}r^{2-n}$ which is the same as Green's function of $\mathbb R^n$. Therefore the conclusion follows from Theorem \ref{thm:1}. The proof for $n=2$ is similar.
\end{proof}
The condition (3) in Introduction says that the mean curvature of a small geodesic sphere in a harmonic manifold is constant. The following theorem characterizes a harmonic manifold in terms of the mean curvature.
  \begin{thm}
Let $H(r)$ be the mean curvature of a geodesic sphere of radius $r$ in a simply connected harmonic manifold $M$. If $H(r)$ is the same as that in a space form, $\mathbb CH^n$ or $\mathbb QH^n$ for any point $p\in M$ with $r(\cdot)={\rm dist}(p,\cdot)$, then $M$ is the space form, $\mathbb CH^n$ or $\mathbb QH^n$, respectively.
 \end{thm}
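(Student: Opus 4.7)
The plan is to argue in parallel with Theorem \ref{thm:1}: show that $H(r)$ determines the radial volume density $\omega_p(r)$, and then invoke the rigidity theorems of Shah, Szab\'o and Ramachandran-Ranjan.

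The key identity I would use is the standard formula $(n-1)H(r)=\Delta r$ on the geodesic sphere $\partial D_r$, which follows from the facts that $\nabla r$ is the unit outward normal and $|\nabla r|=1$ forces $\nabla_{\nabla r}(\nabla r)\perp\nabla r$, so the radial component of the Hessian of $r$ contributes nothing to the trace. Passing to polar normal coordinates at $p$, a direct Jacobian computation gives
$$\Delta r=\partial_r\log\bigl(r^{n-1}\omega_p(r,\theta)\bigr).$$
Since $\omega_p$ depends only on $r$ in a harmonic manifold (condition (5) of the introduction), combining the two yields
$$(n-1)H(r)=\frac{n-1}{r}+\frac{\omega_p'(r)}{\omega_p(r)}.$$

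Next, I would observe that this is a first-order ODE for $\log\omega_p(r)$. The normal-coordinate condition $g_{ij}(0)=\delta_{ij}$ gives the initial condition $\omega_p(0)=1$, and the difference $(n-1)H(r)-(n-1)/r$ is integrable near $r=0$ because the mean curvature of a small geodesic sphere behaves like $(n-1)/r+O(r)$. Hence $\omega_p(r)$ is uniquely recovered from $H$ via
$$\omega_p(r)=\exp\!\int_0^r\!\Bigl[(n-1)H(s)-\tfrac{n-1}{s}\Bigr]ds.$$
In particular, if $H(r)$ agrees with the mean curvature of geodesic spheres in one of the model spaces $\mathbb R^n,\mathbb S^n,\mathbb H^n,\mathbb CH^n,\mathbb QH^n$, then $\omega_p$ on $M$ agrees with the density of the corresponding model. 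Applying the theorems of Shah \cite{Sh}, Szab\'o \cite{Sz} and Ramachandran-Ranjan \cite{RR} then identifies $M$ with the model space.

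The only point that demands real care is the bookkeeping in the Jacobian formula: the factor $r^{n-1}$ coming from polar coordinates on $T_pM$ must be extracted separately from the density $\omega_p=\sqrt{\det g_{ij}}$, which is defined in Cartesian normal coordinates. Once this is sorted out, the proof becomes a short variant of Theorem \ref{thm:1}, using no new geometric input beyond the classical link between mean curvature and the Laplacian of the distance function.
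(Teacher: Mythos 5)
Your proposal is correct and follows essentially the same route as the paper: both show that the mean curvature determines $\Theta(r)=r^{n-1}\omega_p(r)$ via $H=\Theta'/\Theta$ (you obtain this through $\Delta r$ and the polar Jacobian, the paper through the first variation of area, which is the same identity up to the normalization of $H$), and then both conclude by recovering $\omega_p$ and citing Shah, Szab\'o and Ramachandran--Ranjan. Your explicit treatment of the initial condition $\omega_p(0)=1$ and the integrability of $(n-1)H(s)-(n-1)/s$ near $s=0$ is a welcome detail that the paper leaves implicit.
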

\begin{proof}
Let $\gamma$ be a geodesic from $p$ parametrized by arclength $r$ with $\gamma(0)=p$ in a Riemannian manifold $M^n$. Let $\{e_1,\ldots,e_n\}$ be an orthonormal frame at $\gamma(0)$ with $e_1=\gamma'(0)$ and extend it to a parallel orthnormal frame field $\{e_1(r),\ldots,e_n(r)\}$ along $\gamma(r)$ with $e_i(0)=e_i$. Define $Y_i(r),i=2,\ldots,n,$ to be the Jacobi field along $\gamma(r)$ satisfying $Y_i(0)=0$ and $Y_i'(0)=e_i$.
If $M$ is harmonic, then
\begin{equation}\label{Y}
\omega_p(r)=\frac{1}{r^{n-1}}\sqrt{{\rm det}\langle Y_i(r),Y_j(r)\rangle}:=\frac{1}{r^{n-1}}\Theta(r).
\end{equation}
In other words, the volume form $dV$ of $M$ in normal coordinates $x_1,\ldots,x_n$ becomes
$$dV=\omega_p(r)dx_1\cdots dx_n=\Theta(r)\, dr\,dA,$$
where $dA$ is the volume form on the unit sphere in $\mathbb R^n$. Since the volume of a geodesic sphere $\partial D_r$ is $\int_{S}\Theta(r)$ ($S$: unit sphere in $\mathbb R^n$), the first variation of area on the geodesic sphere $\partial D_r$ yields
\begin{equation}\label{H}
H(r)=\frac{\Theta'(r)}{\Theta(r)}.
\end{equation}
As $H(r)$ is the same as that of a space form, $\Theta(r)$ must be the same as that of the space form, and so $\omega_p(r)$ is the same as the volume density function of the space form. Similarly for $\mathbb CH^n$ and $\mathbb QH^n$ with $n$ replaced by $2n$ and $4n$, respectively. Therefore Shah, Szab\'o and Ramachandran-Ranjan's theorems complete the proof.
\end{proof}

\section{Eigenfunctions}

In \eqref{Y} $Y_i(r)$ has a Taylor series expression
$$Y_i(r)=e_i(r)r-\frac{1}{6}R(e_i(r),e_1(r))e_1(r)r^3+o(r^3).$$
Hence
$$\langle Y_i(r),Y_j(r)\rangle=r^2(\delta_{ij}-\frac{1}{3}\langle R(e_i(r),e_1(r))e_1(r),e_j(r)\rangle r^2+o(r^2))$$
and
$${\rm det}\langle Y_i(r),Y_j(r)\rangle=r^{2n-2}{\rm det}\left(I_{n-1}-\frac{1}{3}R_{i11j}(\gamma(r))r^2+o(r^2)\right).$$
If $M$ is harmonic, then
\begin{equation}\label{Ledger}
\frac{d^2}{dr^2}|_{r=0}\,\omega_p(r)=\frac{d^2}{dr^2}|_{r=0}\,\left(\frac{1}{r^{n-1}}\sqrt{{\rm det}\langle Y_i(r),Y_j(r)\rangle}\right)=-\frac{1}{3}Ric(p),
\end{equation}
which is called {\it Ledger's formula} (\cite{B}, p.161).
This formula implies that harmonic manifolds are Einstein.

\begin{thm}\label{thm:3}
{\rm a)} If $\Delta\cos r=-n\cos r$ on a complete simply connected harmonic manifold $M^n$ at any point $p\in M$ with $r(\cdot)={\rm dist}(p,\cdot)$, then $M=\mathbb S^n$.\\
{\rm b)} If $\Delta\cosh r=n\cosh r$ on a complete simply connected harmonic manifold $M^n$ at any point $p\in M$ with $r(\cdot)={\rm dist}(p,\cdot)$, then $M=\mathbb H^n$.
\end{thm}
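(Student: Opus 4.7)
My plan is to convert the Laplace-eigenvalue hypothesis directly into a formula for the mean curvature $H(r)$ of a geodesic sphere, and then invoke the preceding mean-curvature theorem. The starting point is the standard identity: for any smooth radial function $\phi(r)$ defined on a punctured normal neighborhood of $p$,
$$\Delta \phi(r) = \phi''(r) + \phi'(r)\,\Delta r.$$
On a harmonic manifold $\Delta r$ is itself a radial function, and by \eqref{H} it equals $H(r) = \Theta'(r)/\Theta(r)$, the mean curvature of the geodesic sphere $\partial D_r$.

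For part (a) I would substitute $\phi(r)=\cos r$ into this identity to get
$$\Delta \cos r = -\cos r - H(r)\sin r.$$
Setting the right-hand side equal to the assumed value $-n\cos r$ and solving yields
$$H(r) = (n-1)\cot r$$
on the open set where $\sin r\neq 0$, which is precisely the mean curvature of a geodesic sphere of radius $r$ in $\mathbb S^n$. The previous theorem then identifies $M$ with $\mathbb S^n$. Part (b) is entirely parallel: with $\phi(r)=\cosh r$ the same identity becomes $\Delta\cosh r = \cosh r + H(r)\sinh r$, so the hypothesis $\Delta\cosh r = n\cosh r$ forces $H(r) = (n-1)\coth r$, matching $\mathbb H^n$, and the conclusion $M=\mathbb H^n$ follows again from the mean-curvature theorem.

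Since the derivation is essentially one line of algebra in each case, I do not anticipate a substantial obstacle. The only care needed is to work inside the punctured geodesic ball where the distance function is smooth, and to observe that matching $H(r)$ with the space-form value on any nonempty open subinterval of radii is enough to apply the preceding theorem; simple connectedness together with the Shah/Szab{\'o}/Ramachandran--Ranjan classification of volume densities invoked there then produces the global isometry.
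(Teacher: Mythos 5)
Your proposal is correct and follows essentially the same route as the paper: the key step in both is to turn the eigenvalue equation into $\Delta r = H(r) = (n-1)\cot r$ (resp.\ $(n-1)\coth r$), which pins down $\Theta(r)$ and hence the volume density $\omega_p(r)$, reducing everything to the Shah/Szab\'o/Ramachandran--Ranjan classification. The only cosmetic difference is that you finish by citing the mean-curvature theorem (Theorem 2), whereas the paper's first proof packages the same information as the radial Green's function and cites Theorem \ref{thm:1}, and its ``another proof'' continues from $\omega_p(r)=\frac{1}{r^{n-1}}\sin^{n-1}r$ with a self-contained Ledger/Riccati argument.
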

\begin{proof}
a) Since $\Delta\cos r=-n\cos r$, it is not difficult to show
\begin{equation}\label{r}
\Delta r=(n-1)\cot r.
\end{equation}
Let $G_p(r)$ be the radial function on $M$ such that $G_p'(r)=\frac{1}{n\omega_n}\sin^{1-n}r$.
Then
\begin{eqnarray*}
\Delta G_p(r)&=&{\rm div}\nabla G_p(r)={\rm div}(\frac{1}{n\omega_n}\sin^{1-n}r\nabla r)\\
&=&\frac{(1-n)}{n\omega_n}\sin^{-n}r\cos r|\nabla r|^2+\frac{1}{n\omega_n}\sin^{1-n}r\Delta r\\&=&0.\,\,\,\,\,\,\,({\rm by} \,\,\eqref{r})
\end{eqnarray*}
Theorem \ref{thm:1} completes the proof.

({\it Another proof~}) It is easy to show that for a radial function $f$ on a harmonic manifold $M$
\begin{equation}\label{laplacian}
\Delta f=\frac{d^2f}{dr^2}+H(r)\frac{df}{dr},
\end{equation}
where $H(r)$ is the mean curvature of $\partial D_r$. Hence from \eqref{H} and \eqref{r} one gets for $f(r):=r$
$$\frac{\Theta'(r)}{\Theta(r)}=H=(n-1)\cot r.$$
Therefore $$\Theta(r)=\sin^{n-1}r\,\,\,\,{\rm and}\,\,\,\,\omega_p(r)=\frac{1}{r^{n-1}}\sin^{n-1}r.$$
Then
\begin{eqnarray*}
\omega_p'(r)&=&(n-1)\left(\frac{\sin r}{r}\right)^{n-2}\left(\frac{\sin r}{r}\right)',\\
\omega_p''(r) &=& (n-1)(n-2)\left(\frac{\sin r}{r}\right)^{n-3}\left(\left(\frac{\sin
r}{r}\right)'\right)^2\\
&&{{+(n-1)\left(\frac{\sin r}{r}\right)^{n-2}\left(\frac{\sin
r}{r}\right)''}}.
\end{eqnarray*}
Hence  Ledger's formula \eqref{Ledger} implies
$$Ric(p)=-3\,\frac{d^2}{dr^2}|_{r=0}~\omega_p(r)=n-1$$
for any $p\in M$.
Using the Riccati equation for the second fundamental form $h$ on the geodesic sphere, one obtains
\begin{eqnarray*}
Ric(M)&=&-{\rm tr}h'-{\rm tr}h^2\\
&\leq&(n-1)\csc^2r-(n-1)\cot^2r\,\,\,(\because {\rm tr}h^2\geq\frac{1}{n-1}({\rm tr}h)^2) \\
&=&n-1.
\end{eqnarray*}
Since equality holds above, one should have ${\rm tr}h^2=\frac{1}{n-1}({\rm tr}h)^2$. Hence the linear operator $h$ is a multiple of the identity, meaning that every geodesic sphere is umbilic. So the sectional curvature is constant on the geodesic sphere. Therefore $M=\mathbb S^n$ as $M$ is Einstein.

Proof of b) is similar to a).
\end{proof}

\begin{thm}
{\rm a)} Let $f(r):=1+\frac{n+1}{n}\sinh^2r$ be a radial function on a complete \, simply connected \, K\"ahler hamonic \, manifold $M^{2n}$. If {{$\Delta f=4(n+1)f$}} at any point $p\in M$ with $r(\cdot)={\rm dist}(p,\cdot)$, then $M$ is isometric to the complex hyperbolic space $\mathbb CH^n$.\\
{\rm b)} Let $f(r):=1+\frac{n+1}{n}\sinh^2r$ be a radial function on a complete simply connected quaternionic K\"ahler hamonic manifold $M^{4n}$. If $\Delta f=8(n+1)f$ at any point $p\in M$ with $r(\cdot)={\rm dist}(p,\cdot)$, then $M$ is isometric to the quaternionic hyperbolic space $\mathbb QH^n$.
\end{thm}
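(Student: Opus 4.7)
The plan is to exploit the radial Laplacian identity $\Delta f = f''(r) + H(r) f'(r)$ from \eqref{laplacian} together with the eigenvalue hypothesis to pin down the mean curvature $H(r)$ of the geodesic sphere, then integrate via \eqref{H} to recover the volume density $\omega_p(r)$, and finally invoke the K\"ahler and quaternionic versions of Ramachandran-Ranjan's theorem to identify $M$.

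For part (a), I would compute
\[
f'(r) = \frac{2(n+1)}{n}\sinh r\cosh r, \qquad f''(r) = \frac{2(n+1)}{n}(1 + 2\sinh^2 r).
\]
Substituting these, together with $\Delta f = 4(n+1)f$, into \eqref{laplacian} and cancelling the common factor $2(n+1)/n$ should yield
\[
H(r)\sinh r\cosh r = (2n-1) + 2n\sinh^2 r,
\]
which, using $\cosh^2 r = 1+\sinh^2 r$, rearranges to $H(r) = (2n-1)\coth r + \tanh r$. By \eqref{H} this is the logarithmic derivative of $\Theta(r) = \sinh^{2n-1} r\,\cosh r$, and hence
\[
\omega_p(r) = \frac{1}{r^{2n-1}}\sinh^{2n-1} r\,\cosh r,
\]
precisely the volume density of $\mathbb CH^n$. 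Ramachandran-Ranjan's theorem for simply connected noncompact K\"ahler harmonic manifolds then forces $M \cong \mathbb CH^n$.

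Part (b) runs in exactly the same fashion. The same $f'$, $f''$, now paired against $\Delta f = 8(n+1)f$, should produce
\[
H(r) = (4n-1)\coth r + 3\tanh r, \qquad \Theta(r) = \sinh^{4n-1}r\,\cosh^3 r,
\]
whence $\omega_p(r) = \frac{1}{r^{4n-1}}\sinh^{4n-1} r\,\cosh^3 r$, matching the volume density of $\mathbb QH^n$; the quaternionic analog of Ramachandran-Ranjan's theorem then yields $M \cong \mathbb QH^n$.

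The main point that needs care is the algebraic reduction producing the clean closed form for $H(r)$: after dividing by $f'(r)$ one must verify that the resulting expression genuinely decomposes as $(2n-1)\coth r + \tanh r$ (respectively $(4n-1)\coth r + 3\tanh r$), i.e., as the logarithmic derivative of the claimed $\Theta(r)$, and this decomposition rests on the identity $\cosh^2 r - \sinh^2 r = 1$. Secondly, since $f$ involves $\sinh^2 r$ and the eigenvalue equation is assumed to hold at every $p \in M$, the compact case is effectively ruled out, so that Ramachandran-Ranjan's noncompactness hypothesis is automatic and \eqref{H} is available on all of $M\setminus\{p\}$.
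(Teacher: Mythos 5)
Your proposal is correct and follows essentially the same route as the paper: apply the radial Laplacian formula $\Delta f=f''+\frac{\Theta'}{\Theta}f'$ with the eigenvalue hypothesis to derive $\frac{\Theta'}{\Theta}=(2n-1)\coth r+\tanh r$ (resp.\ $(4n-1)\coth r+3\tanh r$), integrate to get $\omega_p(r)$, and conclude via Ramachandran--Ranjan. Your added verification of the algebraic decomposition and the remark on noncompactness only make explicit what the paper leaves implicit.
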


\begin{proof}
a) \eqref{laplacian} and \eqref{H} yield
$$\Delta f=f''+\frac{\Theta'}{\Theta}f'=4(n+1)f.$$
Hence for $f(r)=1+\frac{n+1}{n}\sinh^2r$ one can compute
$$\frac{\Theta'(r)}{\Theta(r)}=(2n-1)\coth r +\tanh r.$$
Therefore
$$\Theta(r)=\sinh^{2n-1}\cosh r\,\,\,\, {\rm and} \,\,\,\,\omega_p(r)=\frac{1}{r^{2n-1}}\sinh^{2n-1}\cosh r.$$
Thus the theorem follows from Ramachandran-Ranjan's theorem \cite{RR}.

b) For $f(r)=1+\frac{n+1}{n}\sinh^2r$
$$\frac{\Theta'(r)}{\Theta(r)}=(4n-1)\coth r +3\tanh r \,\,\,\,{\rm and} \,\,\,\, \Theta(r)=\sinh^{4n-1}r\cosh^3 r .$$
Hence $\omega_p(r)=\frac{1}{r^{4n-1}}\sinh^{4n-1}r\cosh^3r$, which is the same as the volume density of $\mathbb QH^n$.
\end{proof}

%%%%%%%%%%%%%%%%%%%%%%%%%%%%%%%%%%%%%%%%%%%%%%%%%%%%%%%%%%%%%%%%%%%%%%%%%%%%%%%%%%%%%%

\end{document}